\documentclass[11pt, english, draft]{article}
\usepackage{amssymb,amsmath}
\setlength{\topmargin}{-13mm} \setlength{\oddsidemargin}{-0.3cm}
\textwidth=164mm \textheight=240mm
\title{\bf On Recognition by Order and Degree Pattern of Finite Simple Groups }
\author{ {\bf B. Akbari} and {\bf A. R. Moghaddamfar}\\[0.1cm]
{\em Department of Mathematics, K. N. Toosi
University of Technology,}\\
 {\em P. O. Box $16315$-$1618$, Tehran, Iran}\\[0.1cm]
 {\em and} \\[0.1cm]
{\em E-mails}: {\tt moghadam@kntu.ac.ir} and {\tt
moghadam@ipm.ir}}

\newenvironment{proof}{\noindent {\em {Proof}}.}{$\square$
\medskip}
\newtheorem{definition}{Definition}[section]
\newtheorem{corollary}{Corollary}[section]

\newtheorem{theorem}{Theorem}[section]
\newtheorem{proposition}{Proposition}[section]
\newtheorem{remark}{Remark}[section]
\newtheorem{lm}{Lemma}[section]

\newtheorem{problem}{Problem}[section]
\begin{document}
\maketitle
\begin{abstract}
\noindent Let ${\rm GK}(G)$ be the prime graph associated with a
finite group $G$ and $D(G)$ be the degree pattern of $G$. A
finite group $G$ is said to be $k$-fold OD-characterizable if
there exist exactly $k$ non-isomorphic groups $H$ such that
$|H|=|G|$ and $D(H)=D(G)$. A 1-fold OD-characterizable group is
simply called OD-characterizable. The purpose of this paper is
threefold. First, it provides the reader with a few useful and
efficient tools on OD-characterizability of finite groups. Second,
it lists a number of such simple groups that have been already
investigated. Third, it shows that the simple groups $L_6(3)$ and
$U_4(5)$ are OD-characterizable, too.\\[0.3cm]
{\bf Keywords}: prime graph, degree pattern, simple group.
\end{abstract}
\renewcommand{\baselinestretch}{1.1}
\def\thefootnote{ \ }
\footnotetext{{\em $2010$ Mathematics Subject Classification}:
20D05, 20D06.}
\section{Introduction and  Previous Results} In this paper, we will consider only finite groups.
The set of elements order of a finite group $G$ is denoted by
$\omega(G)$ and called the spectrum of $G$ (see \cite{za-1}).
This set is closed and partially ordered by the divisibility
relation; therefore, it is determined uniquely from the subset
$\mu(G)$ of all maximal elements of $\omega(G)$ with respect to
divisibility. For a natural number $n$ we denote by $\pi(n)$ the
set of prime divisors of $n$ and put $\pi(G)=\pi(|G|)$.

There are a lot of ways to associate a graph to a finite group.
One of well-known graphs associated with a finite group $G$ is
called {\em prime graph} (or {\em Gruenberg-Kegel graph}) of $G$
and denoted by ${\rm GK}(G)$, which is a simple undirected graph
and construct as follows. The vertices are all prime divisors of
$|G|$ and two distinct vertices $p$ and $q$ are adjacent if and
only if $pq\in \omega(G)$. The number of connected components of
${\rm GK}(G)$ is denoted by $s(G)$, and the connected components
are denoted by $\pi_i=\pi_i(G)$, with $1\leq i\leq s(G)$. When the
group $G$ has even order, we will assume that $2\in \pi_1(G)$.
Finite simple groups with disconnected prime graph were described
in \cite{k, w}. A complete list of these groups (with corrected
misprints) can be found in \cite[ Tables 1(a)-1(c)]{mazurov-2002}.

In \cite{suz2}, Suzuki studied the structure of the prime graphs
associated with finite simple groups, and found the following
interesting result.
\begin{proposition}[Suzuki]\label{prop1} (\cite {suz2}, Theorem B)  Let $G$ be a
finite simple group whose prime graph ${\rm GK}(G)$ is
disconnected and let $\Delta$ be a connected component of ${\rm
GK}(G)$ whose vertex set does not contain $2$. Then $\Delta$ is a
clique.
\end{proposition}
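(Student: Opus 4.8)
The plan is to establish the proposition via the classification of finite simple groups with disconnected prime graph. By the work of Williams and Kondrat'ev \cite{w,k} (in the corrected form of \cite[Tables 1(a)--1(c)]{mazurov-2002}), such a group $G$ is a sporadic group, an alternating group, or a group of Lie type drawn from an explicit list, and in every case the components $\pi_2(G),\dots,\pi_{s(G)}(G)$ are described explicitly. The key reduction is this: a component $\Delta$ is a clique precisely when, for all primes $r,s\in\Delta$, one has $rs\in\omega(G)$; and the cleanest way to certify this is to exhibit, for each such $\Delta$ not containing $2$, a \emph{cyclic} subgroup of $G$ (in practice a maximal torus, or a Hall subgroup of one) whose order is divisible by every prime in $\Delta$. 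In particular, whenever $|\Delta|=1$ there is nothing to prove, so only components of size at least $2$ need attention.

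For the sporadic groups there are finitely many cases and the spectra $\omega(G)$ are tabulated, so one simply reads off that each component avoiding $2$ is either a single prime or a clique. For the alternating groups $A_n$ occurring in the list, the primes $r$ lying in a component different from $\pi_1$ are exactly those with $r$ prime and $n-2<r\le n$ (so that no admissible product $rq$ lies in $\{2,3,\dots,n\}$ with the right parity); such a component is a singleton $\{r\}$, hence trivially a clique. Thus the genuine content lies entirely in the groups of Lie type.

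For $G$ of Lie type over $\mathbb{F}_q$, $q=p^f$, the order $|G|_{p'}$ factors as a product of cyclotomic values $\Phi_m(q)$, the component $\pi_1(G)$ absorbs $p$ together with the ``connected'' part of this factorization, and each component $\Delta$ avoiding $2$ is the set of prime divisors of a single subproduct. In every family on the list this subproduct is the order of a cyclic torus: for instance for $L_n(q)$ (under the relevant conditions on $n,q$) one has $\Delta=\pi\!\left(\frac{q^{n}-1}{(q-1)(n,q-1)}\right)$, the order of the image in $L_n(q)$ of a Singer cycle, so $\frac{q^{n}-1}{(q-1)(n,q-1)}\in\omega(G)$ and $\Delta$ is a clique; the same pattern — identify the torus carrying $\Delta$ and note it is cyclic on the primes of $\Delta$ — covers the remaining classical families as well as the exceptional and twisted groups, where $\Delta$ is the set of primes of one factor such as $q\pm\sqrt{\smash[b]{\vphantom{q}}\cdot\,}+1$ for $\mathrm{Sz}(q)$, $\mathrm{Ree}(q)$, ${}^{2}F_4(q)$.

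The expected obstacle is bookkeeping rather than a single hard step: one must traverse the entire list, and in particular treat the finitely many small-rank or small-$q$ configurations in which two cyclotomic factors $\Phi_a(q),\Phi_b(q)$ share a prime, or in which the subgroup attached to $\Delta$ fails to be literally cyclic. In each such exceptional configuration the clique property is checked directly against the known set $\mu(G)$ of maximal element orders. (An alternative to this case-check, if one is content to quote it, is the fact that $G$ possesses an abelian — indeed cyclic — Hall $\pi_i$-subgroup for every component $\pi_i\ne\pi_1$, which makes the conclusion immediate; but its justification again rests on the same classification.)
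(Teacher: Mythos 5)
Your strategy is workable, but be aware that it is a genuinely different proof from the one the paper invokes. The paper gives no argument of its own: it quotes Suzuki's Theorem B from \cite{suz2}, and the whole point of that paper (as its title indicates) is that the result is obtained by the Feit--Thompson--Bender--Glauberman method, i.e.\ \emph{without} the classification of finite simple groups. Your route instead runs through CFSG via the Williams--Kondrat'ev determination of the simple groups with disconnected prime graph \cite{w, k, mazurov-2002}, certifying each odd component by a cyclic torus --- or, in the cleanest formulation you relegate to your final parenthesis, by the existence of a nilpotent Hall $\pi_i$-subgroup for $i\geq 2$: in a nilpotent group the Sylow subgroups centralize one another, so for any two primes $r,s$ dividing its order there are commuting elements of orders $r$ and $s$ whose product has order $rs$, and the clique property follows at once. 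That version is correct and is essentially how the clique property is usually read off from the Williams--Kondrat'ev tables; what it costs is dependence on CFSG and on a long case traversal that you only sketch, and what Suzuki's argument buys is classification-freeness. In the context of this paper, which relies on CFSG throughout, your route would be an acceptable substitute, but it should not be attributed to \cite{suz2}.

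One small correction in your alternating-group step: a prime $r$ is an isolated vertex of ${\rm GK}(\mathbb{A}_n)$ exactly when $r>n-3$, i.e.\ $r\in\{n-2,\,n-1,\,n\}$, not when $r>n-2$. For instance $5=7-2$ is isolated in ${\rm GK}(\mathbb{A}_7)$, since neither $10$ nor $15$ lies in $\omega(\mathbb{A}_7)$. This does not harm your argument, as such components are still singletons, but the stated inequality misses a case.
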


\begin{remark} By a clique we shall mean a complete graph and we write $K_n$ to
be the clique of order $n$. Let $\Gamma=(V, E)$ be a simple graph.
Note that, in Proposition \ref{prop1}, it does not require $G$ to
be simple. Moreover, it shows that the prime graph of $G$ has the
following structure:
$${\rm GK}(G)={\rm GK}[\pi_1]\oplus K_{n_2}\oplus \cdots\oplus K_{n_s},$$
where ${\rm GK}[\pi_1]$ denotes the induced subgraph ${\rm
GK}(G)[\pi_1]$, $n_i=|\pi_i|$ and $s=s(G)$. We recall that the
induced subgraph ${\rm GK}(G)[\pi_1]$ has the vertex set $\pi_1$
and it contains all edges of ${\rm GK}(G)$ which join vertices in
$\pi_1$.
\end{remark}

The {\em degree} $\deg_G(p)$ of a vertex $p\in \pi(G)$ is the
number of edges incident on $p$ in ${\rm GK}(G)$. If
$\pi(G)=\{p_1, p_2, \ldots, p_k\}$ with $p_1< p_2< \cdots< p_k$,
then we put
\[ {\rm D}(G):=\big(\deg_G(p_1), \deg_G(p_2), \ldots, \deg_G(p_k)\big), \]
and call this $k$-tuple the {\em degree pattern of $G$}.  We
denote by $h_{\rm OD}(G)$ the number of isomorphism classes of
finite groups $H$ such that $|H|=|G|$ and ${\rm D}(H)={\rm
D}(G)$. Obviously, $1\leqslant h_{\rm OD}(G)<\infty$ for any
finite group $G$. In terms of function $h_{\rm OD}(\cdot)$, the
groups $G$ are classified as follows:

\begin{definition} A finite group $G$ is called a {\em $k$-fold
OD-characterizable} group if $h_{\rm OD}(G)=k$. A $1$-fold
OD-characterizable group is simply called an {\em
OD-characterizable} group.
\end{definition}
The notation for sporadic and simple groups of Lie type is
borrowed from \cite{atlas}. Moreover, we use the notation
$\mathbb{A}_n$ to denote an alternating of degree $n$. There are
scattered results in the literature showing that many of simple
groups have the same prime graphs, for instance, it was
shown in \cite{hag}, \cite{vv}, \cite{zve} that each of the following sets consist of simple groups\\

\begin{tabular}{llll} (a) &  $\{S_{2n}(q), O_{2n+1}(q)\}$ & &
\cite[Proposition
7.5]{vv}\\[0.3cm]
(b) & $\{L_2(11), M_{11}\}$,  & & \cite[Theorem 3]{hag}\\[0.3cm]
(c) &  $\{\mathbb{A}_5, \mathbb{A}_6\}$, $\{\mathbb{A}_7, L_2(49),
U_4(3)\}$, $\{\mathbb{A}_9, J_2, S_6(2), O^+_8(2)\}$, &  &\cite[Theorem]{zve}\\[0.3cm]
(d) &  $\{\mathbb{A}_n, \mathbb{A}_{n-1}\}$, $n$ is odd, and $n$
and $n-4$ are composite,&  &  \cite[Theorem]{zve}.
\\[0.5cm]
\end{tabular}

\noindent have the same prime graphs and so the same degree
pattern. On the other hand, the symplectic group $S_{2n}(q)$ and
the orthogonal group $O_{2n+1}(q)$ have the same orders, too, that
is
$$|S_{2n}(q)|=|O_{2n+1}(q)|=\frac{1}{(2,q-1)}q^{n^2}\prod\limits^n_{i=1}(q^{2i}-1).$$
Therefore, in the case that $S_{2n}(q)\ncong O_{2n+1}(q)$ (that
is $q$ odd and $n>2$), we have
$$h_{\rm OD}(S_{2n}(q))=h_{\rm OD}(O_{2n+1}(q))\geq 2.$$
Notice that, we have $S_{2n}(2^m)\cong O_{2n+1}(2^m)$ and
$S_4(q)\cong O_{5}(q)$ (see for example \cite{carter}). There is
also another pair of non-isomorphic simple groups with the same
order, that is $L_4(2)\cong \mathbb{A}_8$ and $L_3(4)$. Actually,
about simple groups with the same order, we have the following
proposition (see \cite{shi}).

\begin{proposition}\label{shi-simple} Every finite simple group can be
determined up to isomorphism in the class of finite simple groups
by its order, except exactly in the following cases:
\begin{itemize}
\item[{(a)}] $L_4(2)\cong \mathbb{A}_8$ and $L_3(4)$  have the same order, i.e., $20160$,
\item[{(b)}] $O_{2n+1}(q)$ and $S_{2n}(q)$ have
the same order for $q$ odd, $n>2$.
\end{itemize}
\end{proposition}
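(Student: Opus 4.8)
The plan is to deduce the statement from the classification of finite simple groups (CFSG), which is the route of \cite{shi} and essentially the only one available. By CFSG every finite simple group is a cyclic group $\mathbb{Z}_p$ of prime order, an alternating group $\mathbb{A}_n$ with $n\geqslant 5$, a simple group of Lie type, or one of the $26$ sporadic groups. First I would dispose of the abelian case: by Burnside's $p^{a}q^{b}$-theorem a nonabelian simple group has at least three distinct prime divisors, so its order is never a prime power; hence no $\mathbb{Z}_p$ shares its order with a nonabelian simple group, and distinct primes give non-isomorphic $\mathbb{Z}_p$. This reduces the problem to deciding, inside the long but completely explicit list of nonabelian simple groups, exactly which pairs have equal order.

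Next I would clear away the ``cheap'' comparisons. Two alternating groups of equal order are equal, since $n\mapsto n!/2$ is strictly increasing for $n\geqslant 5$. The orders of the $26$ sporadic groups are known explicitly (for example from \cite{atlas}), are pairwise distinct, and can be checked one at a time not to equal $n!/2$ for any $n$ nor the order of any group of Lie type; the efficient device here is to compare, for a well-chosen prime $r$ (typically the largest prime divisor, or $2$), the exact $r$-part of the two candidate orders against what the Lie-type order formulas can produce, aided by Zsygmondy's theorem on primitive prime divisors of $q^{k}-1$. What then remains is to compare $\mathbb{A}_n$ with the groups of Lie type, and the groups of Lie type with one another.

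The main obstacle is exactly this last comparison among Lie-type groups. For each family $X_\ell(q)$ the order is an explicit polynomial in $q=p^{f}$ of the shape $q^{N}\prod_i(q^{d_i}-1)$ (up to the small central factor), and the key point is that this order together with the characteristic $p$ determines the datum $(X,\ell,f)$, up to the interchange $B_\ell\leftrightarrow C_\ell$: one recovers $p$ from the unique ``large'' prime-power factor, and then recovers $q$ and the degrees $d_i$ by analysing the cyclotomic factorisation of $\prod_i(q^{d_i}-1)$ via primitive prime divisors. This is precisely Artin's classical determination of the orders of the classical simple groups, extended to the exceptional families; carrying the bookkeeping through uniformly across all types is where I expect the real labour to lie, and it is done in the work compiled in \cite{shi}. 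The upshot is that two distinct nonabelian simple groups have equal order only in the following ways: a short list of small ``accidental'' coincidences that are in fact genuine isomorphisms and hence not counterexamples, namely $L_2(4)\cong L_2(5)\cong\mathbb{A}_5$, $L_2(7)\cong L_3(2)$, $L_2(9)\cong\mathbb{A}_6$, $L_4(2)\cong\mathbb{A}_8$, together with $S_4(q)\cong O_5(q)$ for all $q$ and $B_\ell(2^m)\cong C_\ell(2^m)$; the single genuine exception $L_3(4)$ versus $L_4(2)\cong\mathbb{A}_8$, of common order $20160$, which are non-isomorphic since they already differ in their spectra $\omega(\cdot)$; and the infinite family $B_\ell(q)$ versus $C_\ell(q)$ with $q$ odd and $\ell>2$, whose orders agree identically by the formula displayed just before the statement while the groups are non-isomorphic. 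Collecting these cases yields precisely the exceptions (a) and (b), which is the assertion.
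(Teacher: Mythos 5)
The paper does not prove this proposition at all: it is quoted verbatim from \cite{shi}, and the ``proof'' in the text is the citation. So there is nothing in the paper to match your argument against; what you have written is a sketch of how the cited result is actually established in the literature, and as a sketch it is accurate. The reduction via CFSG, the elimination of the cyclic case by Burnside's $p^aq^b$-theorem, the monotonicity of $n!/2$, the finite check for the sporadic orders, and the identification of the only order coincidences among the remaining groups --- the exceptional isomorphisms $L_2(4)\cong L_2(5)\cong\mathbb{A}_5$, $L_2(7)\cong L_3(2)$, $L_2(9)\cong\mathbb{A}_6$, $L_4(2)\cong\mathbb{A}_8$, $S_4(q)\cong O_5(q)$, $B_n(2^m)\cong C_n(2^m)$, the single accidental pair $\{L_3(4),\,\mathbb{A}_8\}$ of order $20160$ (non-isomorphic, e.g.\ because $6\in\omega(\mathbb{A}_8)\setminus\omega(L_3(4))$), and the infinite family $B_n(q)$ versus $C_n(q)$ for $q$ odd, $n>2$ --- is exactly the content of Artin's order analysis as completed after CFSG. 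Be aware, though, that the step you flag as ``the real labour'' (recovering the characteristic from the largest prime-power factor and then the type and rank from the cyclotomic factorization via Zsigmondy primes, uniformly over all families including the exceptional and twisted ones, with the small cases where the heuristic fails handled separately) genuinely is the whole proof; your write-up defers it to \cite{shi} just as the paper does, so at the level of detail presented, neither you nor the paper has proved anything beyond what is imported from that reference. That is an entirely reasonable thing to do for a result of this scale, but it should be stated as a citation rather than as a proof.
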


As an immediate consequence of Proposition \ref{shi-simple} we
have the following corollary.

\begin{corollary}\label{cor1} If $G$ is a finite simple group, then there
exists at most one finite simple group, non-isomorphic to $G$,
with the same order and degree pattern as $G$. \end{corollary}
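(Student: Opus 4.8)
The plan is to read the statement off directly from Proposition~\ref{shi-simple}; the degree pattern will play essentially no role, since the order condition alone already bounds the number of candidates. First I would let $H$ be an arbitrary finite simple group with $|H|=|G|$, ${\rm D}(H)={\rm D}(G)$ and $H\ncong G$, and show that $H$ is uniquely determined by $G$. Since $G$ and $H$ are non-isomorphic finite simple groups of the same order, Proposition~\ref{shi-simple} forces us into one of its two exceptional cases.

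In case~(a), the only isomorphism classes of simple groups of order $20160$ are $\mathbb{A}_8\ (\cong L_4(2))$ and $L_3(4)$; as $G$ is one of these two and $H$ is a non-isomorphic one of the same order, $H$ is forced to be the other. In case~(b), $|G|$ equals the common order of $O_{2n+1}(q)$ and $S_{2n}(q)$ for some pair $(n,q)$ with $q$ odd and $n>2$ that is determined by $|G|$, and by Proposition~\ref{shi-simple} these are the only two simple groups of that order, so one of them equals $G$ and $H$ must be the other. In either case there is at most one admissible $H$, which is precisely the assertion of Corollary~\ref{cor1}.

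The only point that really requires attention is verifying that each exceptional family in Shi's theorem contributes exactly two isomorphism classes of simple groups of the relevant order, so that no third candidate for $H$ can arise, and that the two exceptional situations cannot occur simultaneously for one fixed order; both of these are immediate from the wording of Proposition~\ref{shi-simple}. I therefore do not expect a genuine obstacle here: the corollary is a bookkeeping consequence of Shi's classification, and the hypothesis ${\rm D}(H)={\rm D}(G)$ enters only as a harmless additional restriction that can merely reduce the number of candidates below the bound already supplied by equality of orders.
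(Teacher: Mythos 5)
Your argument is correct and is exactly the deduction the paper intends: the authors state Corollary~\ref{cor1} as an immediate consequence of Proposition~\ref{shi-simple} without further proof, and your write-up simply spells out the bookkeeping (each exceptional case of Shi's theorem contributes exactly two isomorphism classes of a given order, and the degree-pattern hypothesis can only shrink the candidate set). No gap, and no difference in approach.
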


It was currently shown that many finite simple groups are
OD-characterizable or $2$-fold OD-characterizable. We have listed
these groups in Table 1.
\begin{center}
{\bf Table 1}. Some non-abelian simple groups $S$ with $h_{\rm
OD}(S)=1$ or $2$.\\[0.4cm]
$\begin{array}{l|l|c|l} \hline S & {\rm Conditions \ on} \ S&
h_{\rm OD}(S) & {\rm Refs.} \\ \hline
 \mathbb{A}_n & \ n=p, p+1, p+2 \ (p \ {\rm a \ prime})& 1 &  \cite{mz1}, \cite{mzd}    \\
 & \ 5\leqslant n\leqslant 100, n\neq 10   & 1 & \cite{hm}, \cite{kogani}, \cite{banoo-alireza},  \\
 & & & \cite{mz3}, \cite{zs-new2} \\
& \ n=106, \ 112 & 1 &      \cite{yan-chen}       \\
& \ n=10 & 2 &      \cite{mz2}       \\[0.2cm]
L_2(q) &  q\neq 2, 3& 1 &    \cite{mz1}, \cite{mzd},\\
& & &  \cite{zshi} \\[0.1cm]
L_3(q) &  \ |\pi(\frac{q^2+q+1}{d})|=1, \ d=(3, q-1) & 1 &   \cite{mzd} \\[0.2cm]
U_3(q) &  \ |\pi(\frac{q^2-q+1}{d})|=1, \ d=(3, q+1), q>5 & 1 &   \cite{mzd} \\[0.2cm]
L_3(9) & & 1 & \cite{zs-new4}\\[0.1cm]
U_3(5) &   & 1 &   \cite{zs-new5} \\[0.1cm]
L_4(q) &  \ q\leqslant 37  & 1 &   \cite{bakbari}, \cite{akbari-1}, \cite{amr} \\[0.1cm]
U_4(7) &   & 1 &   \cite{amr} \\[0.1cm]
L_n(2) & \ n=p \ {\rm or} \ p+1, \ {\rm for \ which} \ 2^p-1 \ {\rm is \ a \ prime} & 1 & \cite{amr} \\[0.2cm]
L_n(2) & \ n=9, 10, 11  & 1 &   \cite{khoshravi}, \cite{R-M} \\[0.1cm]
U_6(2) & & 1 & \cite{LShi} \\[0.1cm]
R(q) & \ |\pi(q\pm \sqrt{3q}+1)|=1, \ q=3^{2m+1}, \ m\geqslant 1 & 1 & \cite{mzd} \\[0.2cm]
{\rm Sz} (q) & \ q=2^{2n+1}\geqslant 8& 1 &   \cite{mz1}, \cite{mzd} \\[0.2cm]
B_m(q), C_m(q) &  m=2^f\geqslant 4,  \
|\pi\big((q^m+1)/2\big)|=1, \
 & 2 & \cite{akbarim}\\[0.2cm]
B_2(q)\cong C_2(q) &  \ |\pi\big((q^2+1)/2\big)|=1, \ q\neq
3 & 1 & \cite{akbarim}\\[0.2cm]
B_m(q)\cong C_m(q) &  m=2^f\geqslant 2, \ 2|q, \
|\pi\big(q^m+1\big)|=1, \ (m, q)\neq (2, 2) & 1 &
\cite{akbarim}\\[0.2cm]
B_p(3), C_p(3) &  |\pi\big((3^p-1)/2\big)|=1, \  p \ {\rm is \ an
\ odd \
prime}  & 2 & \cite{akbarim}, \cite{mzd}\\[0.2cm]
B_3(5), C_3(5) & & 2 & \cite{akbarim} \\[0.2cm]
C_3(4) & & 1 & \cite{moghadam} \\[0.1cm]
S &  \ \mbox{A sporadic simple group} & 1 & \cite{mzd} \\[0.1cm]
S &  \ \mbox{A simple group with} \ |\pi(S)|=4, \ \ S\neq \mathbb{A}_{10} & 1 & \cite{zs} \\[0.1cm]
S &  \  \mbox{A simple group with} \ |S|\leqslant 10^8, \ \ S\neq \mathbb{A}_{10}, \ U_4(2) & 1 & \cite{ls} \\[0.1cm]
S &  \  \mbox{A simple $C_{2,2}$- group} & 1 & \cite{mz1}\\[0.2cm]
\hline
\end{array}$
\end{center}
\vspace{0.5cm} According to the results in Table 1, $h_{\rm
OD}(\mathbb{A}_{10})=2$. In fact, another group with the same
order and degree pattern as $\mathbb{A}_{10}$ is
$\mathbb{Z}_3\times J_2$, that is
$$ |\mathbb{A}_{10}|=|\mathbb{Z}_3\times J_2|=2^{7}\cdot 3^{4}\cdot 5^{2}\cdot 7,
 \ \ \ \ D(\mathbb{A}_{10})=D(\mathbb{Z}_3\times J_2)=(2, 3, 2, 1).$$

Until recently, no example of simple group $M$ has been found
with $h_{\rm OD}(M)\geq 3$. So it would be reasonable to ask
whether it might be possible to find such a simple group. The
following question has been posed in \cite{mz1}:
\begin{problem} Is there a simple group which is $k$-fold
OD-characterizable for $k\geq 3$?
\end{problem}

It is worthwhile to point out that according to the Corollary
\ref{cor1}, if there exists a simple group, say $S$, with $h_{\rm
OD}(S)\geq 3$, then among non-isomorphic groups with the same
order and degree pattern as $S$, certainly there will be a
non-simple group.

In this paper especially we will concentrate on simple groups
$L_6(3)$ and $U_4(5)$. In fact, we will show that both groups are
OD-characteizable. It is worth noting that the prime graph of
$L_6(3)$ is
disconnected, while the prime graph of $U_4(5)$ is connected.\\[0.3cm]
{\bf Main Theorem.} \ {\em The simple groups $L_6(3)$ and
$U_4(5)$ are OD-characteizable.}

\section{OD-Characterizability of Simple Groups $L_6(3)$ and $U_4(5)$}
In what follows, we will show that the projective special linear
group $L_6(3)$ and the projective special unitary group $U_4(5)$
are uniquely determined by order and degree pattern.

\begin{lm} The following statements hold.
\begin{itemize}
\item[{\rm (a)}] If $L$ be the finite simple group $L_6(3)$, then

\subitem{\rm (a.1)} $|L|=2^{11}\cdot 3^{15} \cdot 5\cdot 7\cdot
11^2\cdot 13^2$;

\subitem{\rm (a.2)} $\mu(L)=\{182, 121, 120, 104, 80, 78, 36\}$;

\subitem{\rm (a.3)} ${\rm Out}(L)=\mathbb{Z}_2\times
\mathbb{Z}_2$ and $s(L)=2$;

\subitem{\rm (a.4)} $D(L)=(4, 3, 2, 2, 0, 3)$.

\item[{\rm (b)}] If $U$ be the finite simple group $U_4(5)$, then

\subitem{\rm (b.1)} $|U|=2^7\cdot 3^4\cdot 5^6\cdot 7\cdot 13$;

\subitem{\rm (b.2)} $\mu(U)=\{63, 60, 52, 24\}$;

\subitem{\rm (b.3)} ${\rm Out}(U)=\mathbb{Z}_2\times \mathbb{Z}_2$
and $s(U)=1$;

\subitem{\rm (b.4)} $D(U)=(3, 3, 2, 1, 1)$.
\end{itemize}
\end{lm}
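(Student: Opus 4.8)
\medskip
\noindent\textit{Outline of the argument.} All four items in each of parts (a) and (b) are standard data that only need to be assembled and checked, so the plan is to treat them in order of logical dependence: first the orders, then the outer automorphism groups, then the spectra $\mu(\cdot)$, and finally the prime graphs and degree patterns, the last being a direct consequence of the order together with $\mu(\cdot)$.

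For the orders (a.1) and (b.1) I would start from the classical order formulas for the projective classical groups (see, e.g., \cite{carter, atlas}),
$$|L_6(3)|=\frac{1}{(6,3-1)}\,3^{15}\prod_{i=2}^{6}(3^{i}-1),\qquad |U_4(5)|=\frac{1}{(4,5+1)}\,5^{6}\prod_{i=2}^{4}\bigl(5^{i}-(-1)^{i}\bigr),$$
and factor the right-hand sides into primes; this yields $|L_6(3)|=2^{11}\cdot3^{15}\cdot5\cdot7\cdot11^{2}\cdot13^{2}$ and $|U_4(5)|=2^{7}\cdot3^{4}\cdot5^{6}\cdot7\cdot13$, so that $\pi(L_6(3))=\{2,3,5,7,11,13\}$ and $\pi(U_4(5))=\{2,3,5,7,13\}$. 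For the outer automorphism groups (a.3) and (b.3) I would invoke the standard description of the outer automorphism group of a group of Lie type (see \cite{atlas}): for $L_n(q)$ it is assembled from the diagonal automorphisms $\mathbb{Z}_{(n,q-1)}$, the field automorphisms $\mathbb{Z}_f$ and the graph automorphism $\mathbb{Z}_2$ (with $q=p^{f}$), and for $U_n(q)$ from $\mathbb{Z}_{(n,q+1)}$ together with the $\mathbb{Z}_{2f}$ of field automorphisms; substituting $(n,q)=(6,3)$ and $(n,q)=(4,5)$ gives $\mathbb{Z}_2\times\mathbb{Z}_2$ in both cases.

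The spectra (a.2) and (b.2) are the core of the lemma. I would read off $\mu(L_6(3))$ and $\mu(U_4(5))$ from the known description of element orders in finite simple linear and unitary groups (in practice, also available from the ${\rm ATLAS}$ / GAP character-table data): every element order is $\mathrm{lcm}(|s|,|u|)$ for a commuting pair with $s$ semisimple and $u$ unipotent, so one runs through the (few) semisimple classes of $SL_6(3)$ resp.\ $SU_4(5)$, adjoins the unipotent elements centralizing each, reduces modulo the center $\mathbb{Z}_2$, and keeps the maximal elements under divisibility; for these two small groups this is a short finite check producing $\{182,121,120,104,80,78,36\}$ and $\{63,60,52,24\}$. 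Once $\mu(G)$ is in hand, ${\rm GK}(G)$ is forced by the rule that distinct $p,q\in\pi(G)$ are adjacent iff $pq$ divides some member of $\mu(G)$; tabulating this over the seven resp.\ four maximal orders gives the adjacencies, and reading off degrees in increasing order of the primes gives $D(L_6(3))=(4,3,2,2,0,3)$ and $D(U_4(5))=(3,3,2,1,1)$. The same table shows that $11$ is an isolated vertex of ${\rm GK}(L_6(3))$ — indeed $121=11^{2}$ is the only element order divisible by $11$ — while the remaining five primes form a single component, whence $s(L_6(3))=2$, in agreement with Proposition \ref{prop1} (the second component $\{11\}$ being the clique $K_1$); and that ${\rm GK}(U_4(5))$ is connected, whence $s(U_4(5))=1$.

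I do not expect a genuine obstacle here: the only point requiring real care is confirming that the lists $\mu(L_6(3))$ and $\mu(U_4(5))$ are exhaustive, since a single overlooked element order could introduce an extra edge and thereby change a degree pattern (and, for $L_6(3)$, the number of components). Once the completeness of the two spectra is certified — either from the literature on spectra of classical groups or by the torus-by-torus computation sketched above — the remaining assertions of the lemma follow by routine arithmetic.
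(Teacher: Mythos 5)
Your proposal is correct, and every item checks out: the order formulas give exactly the stated factorizations, the Lie-theoretic description of ${\rm Out}$ gives $\mathbb{Z}_2\times\mathbb{Z}_2$ in both cases, and once the sets $\mu(L_6(3))=\{182,121,120,104,80,78,36\}$ and $\mu(U_4(5))=\{63,60,52,24\}$ are accepted, the adjacency rule forces precisely the graphs of Figures 1 and 2, hence $D(L_6(3))=(4,3,2,2,0,3)$ with $11$ isolated (so $s=2$) and $D(U_4(5))=(3,3,2,1,1)$ with a connected graph (so $s=1$). The difference from the paper is one of economy rather than substance: the paper's entire proof of this lemma is the single citation ``See \cite{atlas}'', treating all four items in each part as tabulated reference data, whereas you reconstruct that data — orders from the classical formulas in \cite{carter}, ${\rm Out}$ from the diagonal/field/graph decomposition, and $\mu$ from the semisimple-times-unipotent analysis of element orders. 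Your route buys independence from the reference and makes explicit where the only genuine risk lies (completeness of the two spectra, which you correctly flag as the one step that cannot be reduced to routine arithmetic); the paper's route buys brevity at the cost of pushing that same completeness question entirely onto the ATLAS. Either is acceptable here, and your derivation of (a.3), (a.4), (b.3), (b.4) from (a.1)--(a.2) and (b.1)--(b.2) is exactly the logical dependence the lemma's data actually has.
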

\begin{proof} See \cite{atlas}. \end{proof}

{\small  \setlength{\unitlength}{4mm}
\begin{picture}(1,1)(-8,7)
\linethickness{0.3pt} %
\put(21,24){\circle*{0.5}}
\put(26,24){\circle*{0.5}}
\put(23.5,21.5){\circle*{0.5}}
\put(28.5,21.5){\circle*{0.5}}
\put(18.5,21.5){\circle*{0.5}}
\put(18.5,21.5){\line(1,1){2.5}}
\put(21,24){\line(1,0){5}}
\put(26,24){\line(1,-1){2.5}}
\put(21,24){\line(1,-1){2.5}}
\put(23.5,21.5){\line(1,1){2.5}}
\put(18.5,21.5){\line(1,0){10}}
\put(23.5,26.5){\circle*{0.5}}
\put(18.2,20.2){\small$5$}%
\put(20.8,24.7){\small$3$}%
\put(25.8,24.7){\small$13$}%
\put(28.2,20.2){\small$7$}%
\put(23.3,20.2){2}%
\put(23.1,27.2){11}%
\put(17,18){{\small \bf Fig. 1.} \ \ \mbox{\small The prime graph
of} $L_6(3)$.}
\put(21,15){\circle*{0.5}}
\put(26,15){\circle*{0.5}}
\put(23.5,12.5){\circle*{0.5}}
\put(28.5,12.5){\circle*{0.5}}
\put(18.5,12.5){\circle*{0.5}}
\put(18.5,12.5){\line(1,1){2.5}}
\put(21,15){\line(1,0){5}}
\put(26,15){\line(1,-1){2.5}}
\put(21,15){\line(1,-1){2.5}}
\put(23.5,12.5){\line(1,1){2.5}}
\put(18.2,11.2){\small$13$}%
\put(20.8,15.7){\small$2$}%
\put(25.8,15.7){\small$3$}%
\put(28.2,11.2){\small$7$}%
\put(23.3,11.2){5}%
\put(17,9){{\small \bf Fig. 2.} \ \ \mbox{\small The prime graph
of} $U_4(5)$.}
\end{picture}}

Let $\Gamma=(V, E)$ be a graph with vertex set $V$ and edge set
$E$. A set $I\subseteq V$ of vertices is said to be an
independent set of $\Gamma$ if no two vertices in $I$ are
adjacent in $\Gamma$. The independence number of $\Gamma$,
denoted by $\alpha(\Gamma)$, is the maximum cardinality of an
independent set among all independent sets of $\Gamma$. For
convenience, we will denote $\alpha({\rm GK}(G))$ as $t(G)$ for a
group $G$. Moreover, for a vertex $r\in \pi(G)$, let $t(r,G)$
denote the maximal number of vertices in independent sets of ${\rm
GK}(G)$ containing $r$.

\begin{theorem}[Theorem 1, \cite{vg}]\label{vas-g}
Let $G$ be a finite group with $t(G)\geqslant 3$ and
$t(2,G)\geqslant 2$, and let $K$ be the largest normal solvable
subgroup $K$ of $G$. Then the quotient group $G/K$ is an almost
simple group, i.e., there exists a finite non-abelian simple
group $S$ such that $S\leqslant G/K\leqslant {\rm Aut}(S)$.
\end{theorem}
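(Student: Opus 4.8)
\noindent The plan is to reduce the statement to a question about the socle of $\bar G := G/K$ and then to use the two independence hypotheses to force that socle to be a single nonabelian simple group. First I would record the standard reduction: since $K={\rm Sol}(G)$ is the \emph{largest} normal solvable subgroup, the quotient $\bar G$ has trivial solvable radical, so every minimal normal subgroup of $\bar G$ is nonabelian and ${\rm Soc}(\bar G)=S_1\times S_2\times\cdots\times S_m$ is a direct product of nonabelian simple groups. Because $C_{\bar G}({\rm Soc}(\bar G))$ is a normal subgroup meeting the socle trivially, it must be trivial, whence ${\rm Soc}(\bar G)\leqslant \bar G\leqslant {\rm Aut}({\rm Soc}(\bar G))$. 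Thus the whole theorem reduces to proving that $m=1$: once the socle is a single nonabelian simple group $S$, the chain $S\leqslant \bar G\leqslant {\rm Aut}(S)$ is exactly the assertion that $\bar G$ is almost simple.

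Next I would set up two elementary adjacency facts that drive the argument. The first is that adjacency passes upward through quotients: if $N\trianglelefteq G$ and $pq\in\omega(G/N)$, then any preimage of an element of order $pq$ has order divisible by $pq$, so the cyclic group it generates contains an element of order exactly $pq$, giving $pq\in\omega(G)$. Consequently every edge of ${\rm GK}(\bar G)$ is an edge of ${\rm GK}(G)$, and (as ${\rm Soc}(\bar G)$ is a subgroup of $\bar G$) every edge of ${\rm GK}({\rm Soc}(\bar G))$ is an edge of ${\rm GK}(\bar G)$. The second fact is that in a direct product a prime dividing one factor and a prime dividing another are always adjacent. Combining these with the Odd Order Theorem---each $S_i$ has even order---I would argue that if $m\geqslant 2$, then for every odd prime $p\in\pi({\rm Soc}(\bar G))$ one multiplies an involution taken from a factor $S_j$ by a $p$-element taken from a different factor $S_i$, producing an element of order $2p$; hence $2$ is adjacent in ${\rm GK}(G)$ to \emph{every} prime of $\pi({\rm Soc}(\bar G))$.

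The heart of the proof, and the step I expect to be the main obstacle, is to upgrade this into a genuine contradiction with the hypotheses when $m\geqslant 2$---that is, to show that $2$ is in fact adjacent to \emph{every} prime of $\pi(G)$, so that $t(2,G)=1$. The difficulty is that the remaining primes live either in $\pi(K)$ or in $\pi(\bar G/{\rm Soc}(\bar G))\subseteq\pi({\rm Out}({\rm Soc}(\bar G)))$, and for these the easy direct-product argument fails: the socle of the quotient does not lift to a subgroup of $G$, a prime of $K$ interacts with $2$ only through the conjugation action of $G$ on $K$, and an element realizing a prime of $\bar G/{\rm Soc}$ may permute the simple factors nontrivially. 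To handle the radical I would invoke the coprime-action adjacency criteria of Vasiliev--Vdovin: if $r\in\pi(K)$ is nonadjacent to $2$, then a $2$-element covering a socle involution must act fixed-point-freely on a Sylow $r$-subgroup of $K$ (otherwise $C_R(t)\neq 1$ yields an element of order $2r$), and the presence of two commuting simple factors, each carrying its own involutions, is what rules this configuration out. The primes of $\bar G/{\rm Soc}$ would be treated by the same criteria together with the induced action of $\bar G$ on the set of factors. This radical-control step is the only genuinely technical input, and it is where the detailed (ultimately CFSG-based) adjacency information for the simple factors is consumed.

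Finally I would assemble the trichotomy on $m$. The value $m=0$ means $\bar G=1$, i.e. $G$ is solvable; but the prime graph of a solvable group has independence number at most $2$, so this contradicts $t(G)\geqslant 3$ and in particular guarantees ${\rm Soc}(\bar G)\neq 1$. The value $m\geqslant 2$ yields, by the analysis above, $t(2,G)=1$, contradicting $t(2,G)\geqslant 2$. Hence $m=1$, and $\bar G=G/K$ is almost simple, as claimed. In short, the two hypotheses are used on separate fronts---$t(G)\geqslant 3$ eliminates the solvable case and $t(2,G)\geqslant 2$ eliminates several simple factors---while everything beyond the radical-control lemma is the socle reduction together with the two adjacency bookkeeping facts.
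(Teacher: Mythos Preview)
The paper does not prove this theorem: it is quoted verbatim from Vasil'ev--Gorshkov \cite{vg} and used as a black box in the subsequent OD-characterizations of $L_6(3)$ and $U_4(5)$, so there is no in-paper argument to compare your proposal against.

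On its own merits, your outline follows the strategy of Vasil'ev's original argument. The socle reduction and the elimination of $m=0$ via the three-primes lemma for solvable groups are correct and standard. For $m\geqslant 2$ you aim to show $t(2,G)=1$, and the easy half---that $2$ is adjacent in ${\rm GK}(G)$ to every prime of $\pi({\rm Soc}(\bar G))$---is fine. The step you yourself flag as the obstacle, the primes in $\pi(K)$, is where the sketch becomes imprecise. The reference to ``coprime-action adjacency criteria of Vasiliev--Vdovin'' is misplaced: \cite{vv} classifies adjacency in the prime graphs of \emph{simple} groups and does not supply the lemma you need here. What actually drives this step is that if $2r\notin\omega(G)$ for some $r\in\pi(K)$, then on a suitable $G$-chief factor $V$ of $K$ that is an elementary abelian $r$-group, every involution of $G$ must invert $V$; hence any two involutions have product in $C_G(V)$, so the image of a Sylow $2$-subgroup in $G/C_G(V)$ has a unique involution and is therefore cyclic or generalized quaternion. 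One then has to argue that this is incompatible with $G/K$ containing $S_1\times S_2$, whose Sylow $2$-subgroup carries a Klein four-group---but this last step requires care, since $S_1\times S_2$ lives in a quotient, not a subgroup, and lifts of commuting involutions need not commute. The outer primes in $\pi(\bar G)\setminus\pi({\rm Soc}(\bar G))$ likewise need a genuine argument via the wreath-product structure of ${\rm Aut}(S_1\times\cdots\times S_m)$, which you gesture at but do not carry out. So the architecture is right, the two hypotheses are deployed against the correct cases, but the cited tool is wrong and the two residual cases remain sketches rather than proofs.
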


Given a prime $p$, we denote by $\mathcal{S}_p$ the set of
non-abelian finite simple groups $P$ such that $\max \pi(P)=p$.
Using Table 1 in \cite{za} (see also \cite[Table
4]{banoo-alireza}), we have listed the non-abelian simple groups
and their orders in $\mathcal{S}_{13}$, in Table 2.

\begin{center}
{\bf Table 2}. {\em The simple groups $S$ with
$\pi(S)\subseteq\{2, 3, 5, 7, 11, 13\}$.}\\[0.4cm]
\begin{tabular}{|llc||llc|}\hline
$S$ & $|S|$ & $|{\rm Out}(S)|$ & $S$ & $|S|$ & $|{\rm Out}(S)|$ \\[0.1cm]
\hline
&  & & & &  \\[-0.26cm]
$A_5$ & $2^2\cdot3\cdot5$ & $2$ & $A_{13}$ & $2^9 \cdot3^5
\cdot5^2 \cdot7 \cdot 11\cdot 13$ & $2$ \\ [0.1cm] $A_6$ &
$2^3\cdot3^2\cdot5$ &
$4$ & $A_{14}$ & $2^{10} \cdot3^5 \cdot5^2 \cdot7^2 \cdot 11\cdot 13$ & $2$\\[0.1cm]
$U_4(2)$& $2^6\cdot 3^4\cdot 5$ & $2$ & $A_{15}$ & $2^{10} \cdot3^6 \cdot5^3 \cdot7^2 \cdot 11\cdot 13$ & $2$ \\[0.1cm]
$A_7$ & $2^3\cdot3^2\cdot5\cdot7$ & $2$ & $A_{16}$ &
$2^{14} \cdot3^6 \cdot5^3 \cdot7^2 \cdot 11\cdot 13$ & $2$ \\
[0.1cm] $A_8$ & $2^6\cdot3^2\cdot5\cdot7$ & $2$ & $ G_2(2^2) $ &
$2^{12}\cdot 3^3\cdot 5^2\cdot 7\cdot 13$ & $2$ \\ [0.1cm] $A_9$ &
$2^6\cdot3^4\cdot5\cdot7$ & $2$ & $ B_2(2^3) $ & $2^{12}\cdot
3^4\cdot 5\cdot 7^2\cdot 13$ & $ 6$ \\ [0.1cm] $A_{10}$ &
$2^7\cdot3^4\cdot5^2\cdot7$ & $2$ & $ {\rm Sz}(2^3)$ &
$2^{6}\cdot 5\cdot 7\cdot 13$ & $3$ \\ [0.1cm]
$B_3(2)$ & $2^9\cdot 3^4\cdot 5\cdot 7$ & $1$ & $ L_2(2^6) $ & $2^{6}\cdot 3^2\cdot 5\cdot 7\cdot 13$ & $6$\\[0.1cm]
$O_8^+(2)$ & $2^{12}\cdot 3^5\cdot 5^2\cdot 7$ & $6$ & $ L_3(3) $ & $2^{4}\cdot 3^3\cdot 13$ & $2$\\[0.1cm]
$L_3(2^2)$ & $2^6\cdot 3^2\cdot 5\cdot 7$& ${12}$ & $ L_4(3) $ & $2^{7}\cdot 3^6 \cdot 5\cdot 13$ & $4$ \\[0.1cm]
$L_2(2^3)$ & $2^3\cdot 3^2\cdot 7$ & $3$ & $L_5(3)$ & $2^9\cdot 3^{10}\cdot 5 \cdot {11}^2\cdot 13$ & $2$ \\[0.1cm]
$U_3(3)$ & $2^5\cdot 3^3\cdot 7$ & $2$ & $L_6(3)$ & $2^{11}\cdot 3^{15}\cdot 5\cdot 7\cdot {11}^2\cdot {13}^2$ & $4$ \\[0.1cm]
$U_4(3)$ & $2^7\cdot 3^6\cdot 5\cdot 7$ & $8$ & $ B_3(3)$ & $2^{9}\cdot 3^9\cdot 5\cdot 7\cdot 13$ & $2$\\[0.1cm]
$U_3(5)$ & $2^4\cdot 3^2\cdot 5^3\cdot 7$ & $6$ & $ O_8^+(3)$ & $2^{12}\cdot 3^{12}\cdot 5^2\cdot 7\cdot 13$ & $24$ \\[0.1cm]
$L_2(7)$& $2^3\cdot 3\cdot 7$ &  $2$ & $ G_2(3) $ & $2^{6}\cdot 3^6\cdot 7\cdot 13$ & $2 $ \\[0.1cm]
$B_2(7)$ & $2^8\cdot 3^2\cdot 5^2\cdot 7^4$& $2$ & $ C_3(3)$ & $2^{9}\cdot 3^9\cdot 5\cdot 7\cdot 13$ & $2$\\[0.1cm]
$L_2(7^2)$ & $2^4\cdot 3\cdot 5^2\cdot 7^2$ & $4$ & $ L_3(3^2) $ & $2^{7}\cdot 3^6\cdot 5\cdot 7\cdot 13$ & $4$ \\[0.1cm]
$J_2$ & $2^7\cdot 3^3\cdot 5^2\cdot 7$ & $2$ & $ L_2(3^3) $ & $2^{2}\cdot 3^3\cdot 7\cdot 13$ & $6$\\[0.1cm]
$A_{11}$ & $2^7 \cdot3^4 \cdot5^2 \cdot7 \cdot 11$ & $2$ &
 $ U_4(5)$ &
$2^{7}\cdot 3^4\cdot 5^6\cdot 7\cdot 13$ & $4$ \\ [0.1cm]
$A_{12}$ & $2^9 \cdot3^5 \cdot5^2 \cdot7 \cdot 11$ & $2$ &
 $ B_2(5) $
& $2^{6}\cdot 3^2\cdot 5^4\cdot 13$ & $2 $ \\ [0.1cm] $U_5(2)$ &
$2^{10}\cdot3^5\cdot5\cdot11$ & $2$ &   $ L_2(5^2)$ & $2^{3}\cdot
3\cdot 5^2\cdot 13$ & $4$      \\ [0.1cm] $U_6(2)$ &
$2^{15}\cdot3^6\cdot5\cdot7\cdot11$ & $6$ &   $L_2(13)$ &
$2^{2}\cdot 3\cdot 7\cdot 13$ & $2$           \\
[0.1cm] $L_2(11)$ & $2^2\cdot3\cdot5\cdot11$ & $2$ &   $Suz$ &
$2^{13}\cdot 3^{7}\cdot 5^2\cdot 7\cdot 11\cdot 13$ & $2$     \\[0.1cm]
$M_{11}$ & $2^4\cdot3^2\cdot5\cdot11$ & $1$ &     $Fi_{22}$ & $2^{17}\cdot 3^{9}\cdot 5^2\cdot 7\cdot 11\cdot 13$ & $2$            \\
[0.1cm] $M_{12}$ & $2^6\cdot3^3\cdot5\cdot11$ & $2$ &     $ {^3D_4(2)}$ & $2^{12}\cdot 3^4\cdot 7^2\cdot 13$ & $3$        \\
[0.1cm] $M_{22}$
& $2^7\cdot3^2\cdot5\cdot7\cdot11$ & $2$ &  $ {^2F_4(2)}'$ & $2^{11}\cdot 3^3\cdot 5^2\cdot 13$ & $ 2$  \\[0.1cm]  $HS$ &
$2^{9}\cdot 3^2\cdot 5^3\cdot 7\cdot 11$ & $2$ &     $ U_3(2^2)$ &
$2^{6}\cdot 3\cdot 5^2\cdot 13$ & $4$                 \\[0.1cm]
$M^cL$ & $2^7\cdot 3^6\cdot
5^3\cdot 7\cdot 11$ & $2$&  &   &  \\[0.1cm]
\hline
\end{tabular}
\end{center}

\begin{lm}\label{inK} Let $G$ be a finite group, with
$|G|=p_1^{m_1}p_2^{m_2}\cdots p_s^{m_s}$, where $s, m_1,
m_2,\ldots, m_s$ are positive integers and $p_1, p_2, \ldots,
p_s$ distinct primes. Let $\Delta=\{p_i\in \pi(G) \ | \ m_i=1
\}$, and for each prime $p_i\in \Delta$, let
$\Delta(p_i)=\{p_j\in \Delta \ | \ j\neq i, \ p_j\nmid p_i-1
\mbox{and} \ p_i\nmid p_j-1\}$. Let $K$ be a normal slovable
subgroup of $G$. Then there hold.
\begin{itemize}
\item[$(1)$] If $p_i\in \Delta$ divides the
order of $K$, then for each prime $p_j\in \Delta(p_i)$,  $p_j\sim
p_i$ in ${\rm GK}(G)$. In particular, $\deg_G(p_i)\geqslant
|\Delta(p_i)|$.

\item[$(2)$] If $|\Delta|=s$ and for all $i=1, 2, \ldots, s$, $|\Delta(p_i)|=s-1$, then
$G$ is a cyclic group of order $|G|$ and ${\rm D}(G)=(s-1, s-1,
\ldots, s-1)$.
\end{itemize}
\end{lm}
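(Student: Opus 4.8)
The plan is to establish (1) by a Sylow/normalizer argument and then derive (2) from it together with a short induction. Throughout put $n=|G|$. Note first that for $p_i\in\Delta$ we have $|G|_{p_i}=p_i$, and if in addition $p_i\mid|K|$ then $|K|_{p_i}=p_i$ as well, since $|K|$ divides $n$; similarly for any $p_j\in\Delta$ one has $|K|_{p_j}\mid|G|_{p_j}=p_j$. Fix $p_i\in\Delta$ with $p_i\mid|K|$ and fix $p_j\in\Delta(p_i)$; it is enough to exhibit an element of order $p_ip_j$ in $G$, and I would split into two cases according to whether $p_j$ divides $|K|$. If $p_j\mid|K|$, then since $K$ is solvable it possesses a Hall $\{p_i,p_j\}$-subgroup $H$, necessarily of order $p_ip_j$ by the preceding remark. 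In a group of order $p_ip_j$ with $p_i\neq p_j$, Sylow's theorem forces the number of Sylow $p_i$-subgroups to lie in $\{1,p_j\}$ and to be $\equiv1\pmod{p_i}$; since $p_i\nmid p_j-1$ (a defining condition of $\Delta(p_i)$) this number is $1$, and symmetrically the number of Sylow $p_j$-subgroups is $1$, using $p_j\nmid p_i-1$. Hence $H\cong\mathbb{Z}_{p_i}\times\mathbb{Z}_{p_j}\cong\mathbb{Z}_{p_ip_j}$ and $p_ip_j\in\omega(H)\subseteq\omega(G)$.

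In the remaining case $p_j\nmid|K|$, choose $P\in{\rm Syl}_{p_i}(K)$, so $|P|=p_i$ and in fact $P\in{\rm Syl}_{p_i}(G)$ because $|P|=|K|_{p_i}=p_i=|G|_{p_i}$. By the Frattini argument $G=K\,N_G(P)$, whence $|G:N_G(P)|=|K:N_K(P)|$ divides $|K|$, which is coprime to $p_j$; since $p_j$ divides $n$, it must divide $|N_G(P)|$. As $N_G(P)/C_G(P)$ embeds in ${\rm Aut}(P)\cong\mathbb{Z}_{p_i-1}$ and $p_j\nmid p_i-1$, we get $p_j\mid|C_G(P)|$, so by Cauchy's theorem $C_G(P)$ contains an element of order $p_j$; it commutes with any element of order $p_i$ in $P$, producing an element of order $p_ip_j$. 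In both cases $p_i\sim p_j$ in ${\rm GK}(G)$, and since this holds for every $p_j\in\Delta(p_i)$ these give $|\Delta(p_i)|$ distinct neighbours of $p_i$, so $\deg_G(p_i)\geqslant|\Delta(p_i)|$, which proves (1).

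For (2), the hypothesis $|\Delta|=s$ says that $n$ is squarefree, hence $G$ is solvable: otherwise $G$ would have a non-abelian simple composition factor $S$ with $|S|\mid n$ squarefree, which is impossible since every non-abelian finite simple group has order divisible by $4$ (its order is even by Feit--Thompson, and an order $\equiv 2\pmod 4$ would give a cyclic Sylow $2$-subgroup that is self-centralizing in its normalizer, hence a normal $2$-complement by Burnside's transfer theorem, contradicting simplicity). Applying (1) with $K=G$ now yields $\deg_G(p_i)\geqslant|\Delta(p_i)|=s-1$ for all $i$, so every $\deg_G(p_i)$ equals $s-1$; thus ${\rm GK}(G)=K_s$ and ${\rm D}(G)=(s-1,\ldots,s-1)$. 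It remains to see that $G$ is cyclic, which I would prove by induction on $s$. A minimal normal subgroup $N$ of the solvable group $G$ is elementary abelian, and squarefreeness forces $|N|=p$ for some $p=p_i$; the quotient $G/N$ inherits all the hypotheses on the $s-1$ remaining primes, hence is cyclic by induction. Since $G/C_G(N)$ embeds in ${\rm Aut}(N)\cong\mathbb{Z}_{p-1}$ and no $p_j$ divides $p-1$ (for $j\neq i$ by hypothesis, for $j=i$ trivially), $\gcd(n,p-1)=1$, so $C_G(N)=G$ and $N\leqslant Z(G)$; combined with $G/N$ cyclic this makes $G$ abelian, and an abelian group of squarefree order is cyclic of that order.

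The only genuinely delicate point is the case $p_j\mid|K|$ in part (1): there the Frattini argument by itself does not force $p_j$ into $N_G(P)$, and the Hall-subgroup observation is what makes the argument go through. In part (2), the single step requiring a real idea rather than bookkeeping is the passage from "${\rm GK}(G)$ is complete" to "$G$ is cyclic" — this is the classical cyclic-number phenomenon ($\gcd(n,\varphi(n))=1\Rightarrow G$ cyclic), handled by the minimal-normal-subgroup induction above; everything else reduces to Sylow theory and order arithmetic.
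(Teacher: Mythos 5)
Your proof is correct. The paper itself gives no argument for this lemma, deferring entirely to the citation \cite{akbari-1}, but what you write is the standard (and surely the intended) proof: the Frattini argument combined with the $N/C$ embedding into ${\rm Aut}(P)\cong\mathbb{Z}_{p_i-1}$ for part (1) --- with the correct extra care, via a Hall $\{p_i,p_j\}$-subgroup of $K$, in the case $p_j\mid |K|$ where Frattini alone does not place $p_j$ in $N_G(P)$ --- and for part (2) the classical $\gcd(|G|,\varphi(|G|))=1$ cyclicity argument (squarefree order forces solvability via Feit--Thompson and Burnside, then the minimal-normal-subgroup induction). All steps check out.
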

\begin{proof} see \cite{akbari-1}.  \end{proof}

\begin{theorem}
The projective special linear group $L_6(3)$ is ${\rm
OD}$-characterizable.
\end{theorem}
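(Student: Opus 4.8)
The plan is to take an arbitrary finite group $G$ with $|G|=|L_6(3)|=2^{11}\cdot3^{15}\cdot5\cdot7\cdot11^2\cdot13^2$ and ${\rm D}(G)={\rm D}(L_6(3))=(4,3,2,2,0,3)$ and to prove $G\cong L_6(3)$; since $L_6(3)$ itself realizes these invariants, this yields $h_{\rm OD}(L_6(3))=1$. The first step is to recover the shape of ${\rm GK}(G)$ from the degree pattern. Writing $\pi(G)=\{2,3,5,7,11,13\}$ in increasing order, $\deg_G(11)=0$ makes $11$ an isolated vertex, so ${\rm GK}(G)$ is disconnected; since $\deg_G(2)=4$ and $11$ is isolated, the vertex $2$ is joined to each of $3,5,7,13$, whence $\pi_1(G)=\{2,3,5,7,13\}$, $\pi_2(G)=\{11\}$ and $s(G)=2$. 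Deleting $2$, the subgraph induced on $\{3,5,7,13\}$ has exactly three edges and degree sequence $(2,1,1,2)$ on $(3,5,7,13)$, which forces it to be a path having $3$ and $13$ as its interior vertices; in particular $3\sim13$ and $5\not\sim7$ in ${\rm GK}(G)$.

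Since ${\rm GK}(G)$ is disconnected and $|G|$ is even, the Gruenberg--Kegel structure theorem (\cite{w}; see also \cite{mazurov-2002}) allows exactly three possibilities: $G$ is a Frobenius group, a $2$-Frobenius group, or $G$ possesses a normal series $1\trianglelefteq N\trianglelefteq M\trianglelefteq G$ with $N$ nilpotent, $S:=M/N$ a non-abelian simple group, $G/M\hookrightarrow{\rm Out}(S)$, both $N$ and $G/M$ being $\pi_1(G)$-groups, and $\pi_2(G)=\{11\}$ a connected component of ${\rm GK}(S)$. I would eliminate the first two cases by order counting. If $G=A\rtimes B$ is Frobenius with nilpotent kernel $A$, then $\pi(A)$ is a union of connected components of ${\rm GK}(G)$ on which ${\rm GK}(G)$ induces a complete subgraph; as the component $\{2,3,5,7,13\}$ carries only $7$ of the $10$ possible edges, $\pi(A)=\{11\}$, forcing $|A|=11^2$ and $B\hookrightarrow{\rm Aut}(A)$, which is absurd because $|B|=|G|/121$ is far larger than $|{\rm Aut}(A)|\leq|{\rm GL}_2(11)|=13200$. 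If $G$ is $2$-Frobenius with the usual normal series $1<A<B<G$, then $B/A$ is cyclic with $\pi(B/A)=\{11\}$ and $G/B\hookrightarrow{\rm Aut}(B/A)$; since $11\nmid|A|$ this forces $|B/A|=11^2$ and $|G/B|\mid10$, so $2^{10}\cdot3^{15}\cdot7\cdot13^2$ divides $|A|$; as $A$ is nilpotent and normal in $G$, ${\rm GK}(G)$ would then contain a complete graph on $\{2,3,7,13\}$, whence $\deg_G(7)\geq3$, a contradiction.

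Hence $G$ is of the third type, and $S=M/N$ is a simple group with $\pi(S)\subseteq\pi(G)=\{2,3,5,7,11,13\}$ — so $S$ is one of the groups in Table 2 — in which $11$ is an isolated vertex. Because $N$ and $G/M$ are $\pi_1(G)$-groups, $11\nmid|N|\,|G/M|$, and $|G|=|N|\,|S|\,|G/M|$ then forces $11^2\mid|S|$; inspecting Table 2, the only simple groups whose order is divisible by $11^2$ are $L_5(3)$ and $L_6(3)$. Applying Lemma~\ref{inK} to the solvable subgroup $N$ — here $\Delta=\{5,7\}$ with $\Delta(5)=\{7\}$ and $\Delta(7)=\{5\}$ — shows that $5\mid|N|$ would give $5\sim7$ in ${\rm GK}(G)$, contrary to the first paragraph, so $5,7\nmid|N|$. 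Now if $S=L_5(3)$ then $7\in\pi(G)$ lies neither in $\pi(S)$ nor in $\pi(G/M)$ (since $|{\rm Out}(L_5(3))|=2$), so $7\mid|N|$, which is impossible. Therefore $S\cong L_6(3)$, and then $|L_6(3)|=|S|$ divides $|M|\leq|G|=|L_6(3)|$, forcing $N=1$, $G=M$, and $G\cong L_6(3)$.

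The genuinely technical work is confined to eliminating the Frobenius and $2$-Frobenius alternatives and to the finite but somewhat tedious check that Table 2 is complete for the divisibility condition $11^2\mid|S|$; once the composition factor is cut down to $\{L_5(3),L_6(3)\}$, the fact that $7$ divides $|G|$ disposes of $L_5(3)$ at a stroke. Thus the conceptual core of the argument is simply the interaction of the isolated vertex $11$ with the exponent $11^2$ occurring in $|G|$, which pins the socle down almost completely.
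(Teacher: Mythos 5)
Your argument is correct, but it reaches the conclusion by a genuinely different route than the paper. The paper invokes the Vasilev--Gorshkov theorem (Theorem~\ref{vas-g}): after checking $t(G)\geq 3$ and $t(2,G)\geq 2$ from the reconstructed prime graph, it gets $S\leq G/K\leq {\rm Aut}(S)$ for the largest normal solvable subgroup $K$, and the main labor goes into proving $K$ is a $\{2,3\}$-group (Lemma~\ref{inK} handles $5$ and $7$; a Frattini argument producing an element of order $7\cdot 11$ or $7\cdot 13$ handles $11$ and $13$), after which $5\cdot 7\cdot 11^2\cdot 13^2$ must divide $|S|$ and Table~2 leaves only $L_6(3)$. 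You instead use the classical Gruenberg--Kegel/Williams trichotomy for groups with disconnected prime graph, which obliges you to rule out the Frobenius and $2$-Frobenius configurations (your order-counting and nilpotent-kernel-clique arguments there are sound), but then pays off immediately: the isolated vertex $11$ forces $11^2\mid |S|$, leaving only $L_5(3)$ and $L_6(3)$, and the prime $7$ --- absent from $|L_5(3)|\cdot|{\rm Out}(L_5(3))|$ and excluded from $|N|$ by Lemma~\ref{inK} --- finishes the job. Your route trades the paper's Frattini computations and full-table divisibility scan for the Frobenius/$2$-Frobenius eliminations and a two-candidate check; it is somewhat longer in case analysis but rests only on the older structure theorem and exploits the disconnectedness more directly. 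Both arguments ultimately depend on the completeness of Table~2, and both are valid.
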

\begin{proof}
Assume that $G$ be a finite group such that
$$|G|=|L_6(3)|=2^{11}\cdot 3^{15}\cdot 5\cdot 7\cdot {11}^2\cdot {13}^2
\ \ \ \mbox{and} \ \ \ {\rm D}(G)={\rm D}(L_6(3))=(4, 3, 2, 2, 0,
3).$$ According to these conditions on $G$, the following two
possibilities can occur for the prime graph of $G$:

\vspace{0.95cm}

{\small  \setlength{\unitlength}{4mm}
\begin{picture}(1,1)(4,14)
\linethickness{0.3pt} %
\put(11,14){\circle*{0.5}}
\put(16,14){\circle*{0.5}}
\put(13.5,11.5){\circle*{0.5}}
\put(18.5,11.5){\circle*{0.5}}
\put(8.5,11.5){\circle*{0.5}}
\put(8.5,11.5){\line(1,1){2.5}}
\put(11,14){\line(1,0){5}}
\put(16,14){\line(1,-1){2.5}}
\put(11,14){\line(1,-1){2.5}}
\put(13.5,11.5){\line(1,1){2.5}}
\put(8.2,10.2){\small$5$}%
\put(10.7,14.7){\small$3$}%
\put(15.7,14.7){\small$13$}%
\put(18.2,10.2){\small$7$}%
\put(13.3,10.2){2}%
\put(13.5,16){\circle*{0.5}}
\put(13.2,16.5){\small$11$}%
\put(8.5,11.5){\line(1,0){10}}
\put(23,12.75){\rm or}%
\put(30,14){\circle*{0.5}}
\put(35,14){\circle*{0.5}}
\put(32.5,11.5){\circle*{0.5}}
\put(37.5,11.5){\circle*{0.5}}
\put(27.5,11.5){\circle*{0.5}}
\put(27.5,11.5){\line(1,1){2.5}}
\put(30,14){\line(1,0){5}}
\put(35,14){\line(1,-1){2.5}}
\put(30,14){\line(1,-1){2.5}}
\put(32.5,11.5){\line(1,1){2.5}}
\put(27,10.2){\small$5$}%
\put(29.5,14.7){\small$13$}%
\put(34.5,14.7){\small$3$}%
\put(37,10.2){\small$7$}%
\put(32.1,10.2){2}%
\put(32.5,16){\circle*{0.5}}
\put(32.2,16.5){\small$11$}%
\put(27.5,11.5){\line(1,0){10}}
\put(13,8){{\small \bf Fig. 3.} \mbox{All possibilities for the
prime graph} ${\rm GK}(G)$.}
\end{picture}}
\vspace{3.2cm}

\noindent Let $K$ be the largest normal solvable subgroup of $G$.
We claim that $K$ is a $\{2, 3\}$-group. To prove this, we first
observe that $\pi(K)\cap \{5, 7\}=\emptyset$, since otherwise by
Lemma \ref{inK}, $5\sim 7$, which clashes with the structure of
${\rm GK}(G)$ shown in Fig. 3. Now, we show that $\pi(K)\cap \{11,
13\}=\emptyset$. Suppose the contrary. Assume first that
$11\in\pi(K)$ and take $P\in{\rm Syl}_{11}(K)$. Then, by Frattini
argument, we deduce that $G=KN_G(P)$. Therefore, the normalizer
$N_G(P)$ contains an element of order $7$, say $x$, and so
$P\langle x\rangle$ is a subgroup of $G$ of order $11^i\cdot 7$,
$i=1, 2$, which is a cyclic group in both cases. This shows that
$7\sim 11$ in ${\rm GK}(G)$, an impossibility. By a similar way
we can show that $13\notin\pi(K)$. Finally $K$ is a $\{2,
3\}$-group.

From the structure of the prime graph of $G$ as shown in Fig. 3,
we conclude that $t(2, G)\geq 2$ and $t(G)\geq 3$. Consequently,
from Theorem \ref{vas-g} we conclude that there exists a finite
non-abelian simple group $S$ such that $S\leq G/K\leq {\rm
Aut}(S)$. Since $S\in {\cal S}_{13}$ and $K$ is a $\{2,
3\}$-group, we observe that the order of ${\rm Aut}(S)$ is
divisible by $5\cdot7\cdot {11}^2\cdot {13}^2$. On the other
hand, using Table 2 we see that $\pi({\rm Out}(S))\subseteq\{2,
3\}$, which forces $|S|=2^a\cdot 3^b\cdot 5\cdot 7\cdot
{11}^2\cdot {13}^2$, where $2\leq a\leq 11$ and $0\leq b\leq 15$.
But then, Table 2 shows that the only possibility for $S$ is
$L_6(3)$ and since $|G|=|L_6(3)|$, we obtain $|K|=1$ and $G$ is
isomorphic to $L_6(3)$. \end{proof}
\begin{theorem}
The projective special unitary group $U_4(5)$ is ${\rm
OD}$-characterizable.
\end{theorem}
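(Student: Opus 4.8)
The plan is to proceed exactly as for $L_6(3)$. Assume $G$ is a finite group with $|G|=|U_4(5)|=2^7\cdot 3^4\cdot 5^6\cdot 7\cdot 13$ and ${\rm D}(G)={\rm D}(U_4(5))=(3,3,2,1,1)$. First I would read off the prime graph from the degree pattern: a short edge-count shows that the five edges of ${\rm GK}(G)$ form a triangle on $\{2,3,5\}$ together with two pendant edges attaching $7$ and $13$ to the set $\{2,3\}$, so there are precisely two possibilities for ${\rm GK}(G)$. In both of them $5\not\sim 7$, $5\not\sim 13$ and $7\not\sim 13$; moreover $\{5,7,13\}$ is an independent set and $2$ is non-adjacent to one of $7,13$, so $t(G)\geq 3$ and $t(2,G)\geq 2$.

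Let $K$ be the largest normal solvable subgroup of $G$. The primes of multiplicity one in $|G|$ are exactly $7$ and $13$, and in the notation of Lemma~\ref{inK} one has $\Delta(7)=\{13\}$ and $\Delta(13)=\{7\}$; hence by part~$(1)$ of that lemma, $7\in\pi(K)$ would force $7\sim 13$ in ${\rm GK}(G)$, a contradiction, and similarly for $13$, so $\{7,13\}\cap\pi(K)=\emptyset$. The crucial point is to show $5\notin\pi(K)$ as well. Suppose otherwise and pick $P\in{\rm Syl}_5(K)$, so $P\neq 1$ and $|P|\leq 5^6$. Since $K$ is normal in $G$, the Frattini argument gives $G=KN_G(P)$, whence $7,13\in\pi(G/K)\subseteq\pi(N_G(P))$; choose $y_7,y_{13}\in N_G(P)$ of orders $7$ and $13$. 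Each acts coprimely on $P$, and if $C_P(y_7)\neq 1$ then $G$ contains an element of order $35$, i.e. $5\sim 7$ in ${\rm GK}(G)$, contrary to the above; so $C_P(y_7)=1$, and likewise $C_P(y_{13})=1$. By coprime action both $y_7$ and $y_{13}$ then act fixed-point-freely on $V:=P/\Phi(P)$, an $\mathbb{F}_5$-vector space of dimension $r$ with $1\leq r\leq 6$. But $5$ has multiplicative order $6$ modulo $7$ and order $4$ modulo $13$, so the nontrivial irreducible $\mathbb{F}_5$-representations of $\mathbb{Z}_7$ and of $\mathbb{Z}_{13}$ have dimensions $6$ and $4$ respectively; a fixed-point-free action therefore forces $6\mid r$ and $4\mid r$, hence $12\mid r$, impossible for $1\leq r\leq 6$. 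Consequently $K$ is a $\{2,3\}$-group.

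Now Theorem~\ref{vas-g} yields a finite non-abelian simple group $S$ with $S\leq G/K\leq{\rm Aut}(S)$; since $\pi(S)\subseteq\pi(G)=\{2,3,5,7,13\}$, $S$ is one of the groups in Table~2, with $11\nmid|S|$ and $|S|\mid|G|$. Because $K$ is a $\{2,3\}$-group, $5^6\cdot 7\cdot 13$ divides $|G/K|$, and hence divides $|{\rm Aut}(S)|=|S|\cdot|{\rm Out}(S)|$; as every entry of the $|{\rm Out}(S)|$ column of Table~2 is a $\{2,3\}$-number, it follows that $5^6\cdot 7\cdot 13\mid|S|$. Scanning Table~2, the only simple group listed whose order is divisible by $5^6$ is $U_4(5)$ itself, so $S=U_4(5)$; comparing orders then gives $|K|=1$ and $G\cong U_4(5)$.

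The step I expect to be the main obstacle is forcing $5\notin\pi(K)$. An order count alone leaves the small candidates ${\rm Sz}(8)$ and $L_2(64)$ in play — each has $5\cdot 7\cdot 13$ dividing its order, but with $5$-part only $5$ — so if $5$ were allowed in the solvable radical one would have to eliminate them by an ad hoc study of element orders in $S$ and its automorphic extensions. The fixed-point-free-action argument above, exploiting that an integer $r$ with $1\leq r\leq 6$ cannot be divisible by both $6$ and $4$, is what keeps this uniform; once it is in place, the remainder is routine bookkeeping with Table~2, exactly as for $L_6(3)$.
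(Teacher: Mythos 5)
Your proof is correct, and at the decisive step it takes a genuinely different route from the paper. The paper never shows $5\notin\pi(K)$; after the same reduction via Theorem \ref{vas-g} it narrows $S$ down by order considerations to $L_2(27)$ or $U_4(5)$ and then kills $L_2(27)$ a posteriori: in that case all of $5^6$ lies in $K$, so $P\in{\rm Syl}_5(K)$ has order $5^6$, Frattini produces $y$ of order $13$ normalizing $P$, the group $P\langle y\rangle$ is Frobenius with kernel $P$ (as $5\nsim 13$), and $13\mid 5^6-1$ fails. Your version front-loads the work: the coprime fixed-point-free action of elements of orders $7$ and $13$ on $P/\Phi(P)$ forces $6\mid r$ and $4\mid r$ with $1\leq r\leq 6$, which is impossible, so $K$ is a $\{2,3\}$-group and the Table 2 scan reduces to the single condition $5^6\mid |S|$, met only by $U_4(5)$. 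What your approach buys is exactly what you point out at the end: the paper's statement that the only candidates for $S$ are $L_2(27)$ and $U_4(5)$ is in fact too quick --- ${\rm Sz}(8)$, $L_2(64)$ and $L_2(13)$ also have orders of the form $2^a\cdot 3^b\cdot 5^c\cdot 7\cdot 13$ dividing $|G|$ and would each require a separate elimination under the paper's scheme --- whereas your argument disposes of all of them uniformly. Two small points to make explicit in a final write-up: the passage from $C_P(y)=1$ to a fixed-point-free action on $P/\Phi(P)$ uses the standard coprime-action lemma $C_{P/N}(y)=C_P(y)N/N$, and the existence of $y_7,y_{13}$ in $N_G(P)$ uses $G=KN_G(P)$ together with $7,13\notin\pi(K)$, which you have already secured via Lemma \ref{inK}.
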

\begin{proof}
Assume that $G$ is a finite group such that
$$|G|=|U_4(5)|=2^7\cdot 3^4\cdot 5^6\cdot 7\cdot {13}
\ \ \ \mbox{and} \ \ \ {\rm D}(G)={\rm D}(U_4(5))=(3, 3, 2, 1,
1).$$ We have to show that $G\cong U_4(5)$. First of all, from the
structure of the degree pattern of $G$, it is easy to see that
$7\nsim 13$ in ${\rm GK}(G)$, since otherwise ${\rm deg}(2)\leq
2$, which is impossible. In fact, there are only two
possibilities for the prime graph of $G$ shown in Fig. 4.:

\vspace{0.5cm}

{\small  \setlength{\unitlength}{4mm}
\begin{picture}(1,1)(4,14)
\linethickness{0.3pt} %
\put(11,14){\circle*{0.5}}
\put(16,14){\circle*{0.5}}
\put(13.5,11.5){\circle*{0.5}}
\put(18.5,11.5){\circle*{0.5}}
\put(8.5,11.5){\circle*{0.5}}
\put(8.5,11.5){\line(1,1){2.5}}
\put(11,14){\line(1,0){5}}
\put(16,14){\line(1,-1){2.5}}
\put(11,14){\line(1,-1){2.5}}
\put(13.5,11.5){\line(1,1){2.5}}
\put(8.2,10.2){\small$13$}%
\put(10.7,14.7){\small$3$}%
\put(15.7,14.7){\small$2$}%
\put(18.2,10.2){\small$7$}%
\put(13.3,10.2){5}%
\put(23,12.75){\rm or}%
\put(30,14){\circle*{0.5}}
\put(35,14){\circle*{0.5}}
\put(32.5,11.5){\circle*{0.5}}
\put(37.5,11.5){\circle*{0.5}}
\put(27.5,11.5){\circle*{0.5}}
\put(27.5,11.5){\line(1,1){2.5}}
\put(30,14){\line(1,0){5}}
\put(35,14){\line(1,-1){2.5}}
\put(30,14){\line(1,-1){2.5}}
\put(32.5,11.5){\line(1,1){2.5}}
\put(27,10.2){\small$13$}%
\put(29.5,14.7){\small$2$}%
\put(34.5,14.7){\small$3$}%
\put(37,10.2){\small$7$}%
\put(32.1,10.2){5}%
\put(13,8){{\small \bf Fig. 4.} \mbox{All possibilities for the
prime graph} ${\rm GK}(G)$.}
\end{picture}}
\vspace{3.2cm}

Since $S\in{\cal S}_{13}$ and $\{7, 13\}\subseteq \pi(S)$, hence
we obtain $|S|=2^a\cdot 3^b\cdot 5^c\cdot 7\cdot 13$, where $2\leq
a\leq 7$, $0\leq b\leq 4$ and $0\leq c\leq 6$. Comparing the
orders of simple groups listed in Table 2, we observe that the
only possibility for $S$ is $L_2(3^3)$ or $U_4(5)$. If $S$ is
isomorphic to $L_2(3^3)$, then $|K|$ is divisible by $5^6$,
because $|{\rm Out}(S)|=4$. Let $x\in K$ be an element of order 5
and $P\in{\rm Syl}_{5}(K)$. By Frattini argument, we deduce that
$G=KN_G(P)$. Hence the normalizer $N_G(P)$ contains an element of
order $13$, say $y$, and so $H:=P\langle y\rangle$ is a subgroup
of $G$ of order $5^6\cdot 13$. Now, one can easily verify that
the Sylow 5-subgroup $P$ of $H$ is a normal subgroup of $H$. As a
matter of fact, $H$ is a Frobenius group with kernel $P$ and
complement $\langle y\rangle$, because $H$ does not contain an
element of order $5\cdot 13$. But then we must have $13\mid
5^6-1$, which is a contradiction. Therefore, $S$ is isomorphic to
$U_4(5)$, and since $|G|=|U_4(5)|$, we obtain $|K|=1$ and $G$ is
isomorphic to $U_4(5)$. \end{proof}

\end{document}